%% file: iclr2027_conference.tex
\documentclass{article} 
\usepackage{iclr2027_conference,times}

\input{math_commands.tex}

\usepackage{hyperref}
\usepackage{url}

\title{DCEmbed: Scalable Optimization over Neural Surrogates}

\author{Akshay Sreekumar, Ellen Vitercik \& Ram Rajagopal  \\
Stanford University\\
Stanford, CA, USA \\
\texttt{\{akshay81, vitercik, ramr\}@stanford.edu} \\
\And
Nicolas Christianson  \\
Johns Hopkins University \\
Baltimore, MD, USA \\
\texttt{christianson@jhu.edu} \\
\AND
Priya L. Donti  \\
Massachusetts Institute of Technology \\
Cambridge, MA, USA \\
\texttt{donti@mit.edu} \\
}

\iclrfinalcopy 

\newcommand{\wtilde}[1]{\widetilde{#1}}
\newcommand{\wbar}[1]{\overline{#1}}
\usepackage{amsmath}
\usepackage{amssymb}
\usepackage{amsthm}
\usepackage{booktabs}
\usepackage{thm-restate}
\newtheorem{theorem}{Theorem}[section]

\DeclareMathOperator{\ReLU}{ReLU}
\usepackage{algorithm}
\usepackage{algpseudocode}
\usepackage{listings}
\usepackage{graphicx}

\lstdefinestyle{paperpython}{
    language=Python,
    basicstyle=\ttfamily\footnotesize,
    keywordstyle=\bfseries,
    columns=fullflexible,
    keepspaces=true,
    showstringspaces=false,
    breaklines=true,
    captionpos=b
}

\usepackage{color-edits}
\addauthor{ev}{blue}

\begin{document}

\maketitle

\begin{abstract}
Neural surrogates can accelerate large-scale optimization by replacing expensive or intractable model components with more efficient learned approximations, but solving the resulting embedded problems can remain prohibitively costly. 
For instance, standard exact encodings of neural networks with $\ReLU$ activations allow for the embedded problem to be solved by mixed-integer solvers, but add large numbers of binary variables to accommodate the nonlinearity of the activations, which can render the problem computationally prohibitive.
To address this challenge, we propose \texttt{DCEmbed}, a local heuristic for optimization problems with embedded neural surrogates that leverages the difference-of-convex (DC) representation of the neural network and avoids introducing auxiliary activation binaries. 
Exploiting shared structure within the DC representation of a ReLU neural network, we derive a reduced-size, exact formulation for its convex components that can be embedded in optimization problems using just two linear inequalities and one continuous auxiliary variable per hidden neuron. 
Using this formulation, the resulting embedded problem can be solved via an iterative penalty convex-concave procedure, where only the concave portions of the neural terms are approximated at each stage. 
The original objective, constraints, and any discrete decisions are retained exactly, allowing the use of standard convex or mixed-integer optimization solvers to optimize the host and surrogate jointly at each iteration of the procedure. 
In experiments on synthetic quadratic programs, mixed-integer resource allocation, and neural two-stage stochastic programming, our method demonstrates substantially faster progress toward high-quality feasible solutions than approaches using exact mixed-integer embeddings. In particular, \texttt{DCEmbed} achieves $4\times$ lower normalized primal integral than the best exact baseline on the resource allocation problem, while in two-stage stochastic programming it reaches the global surrogate optimum $\sim 5\times$ faster than Gurobi ML.
\end{abstract}

\section{Introduction}
\label{sec:introduction}

Neural surrogates are increasingly used to approximate optimization model components that are expensive to evaluate or difficult to express analytically. 
Applications include approximating recourse costs in two-stage stochastic programming \citep{dumouchelle_neur2sp_2022}, modeling nonlinear physics in power system unit commitment \citep{kody_modeling_2022}, and incorporating constraints learned from data \citep{fajemisin_optimization_2024, maragno_mixed-integer_2025}.
We analyze settings in which trained neural networks replace selected components of the objective and/or constraints of an optimization problem that has discrete and/or continuous decision variables (the \emph{host model}). 
We refer to the remaining, explicitly modeled part as the \emph{residual model}. 
When the inputs to a neural surrogate depend on decision variables, the surrogate must be optimized jointly with this residual model. 
This can make the embedded problem difficult to solve even when mature optimization solvers can efficiently exploit the structure of the residual model. 

Standard exact mixed-integer encodings of $\ReLU$ networks retain the host objective and constraints, but introduce binary variables to represent neural activation states. 
The number of binaries scales with network size and repeated embeddings, leading to high-dimensional combinatorial problems whose solution can become computationally prohibitive. 
Alternative gradient-based solvers like projected gradient descent (PGD) avoid activation binaries by directly optimizing neural surrogates or penalties, but cannot directly accommodate discrete decisions in the host problem. 
Moreover, PGD can require costly projections onto the residual constraints, while poor numerical conditioning can slow progress. 
These limitations motivate a significant need for new methods that can use existing solvers to optimize the neural terms and residual model jointly while retaining any original discrete decisions and avoiding the combinatorial burden of neural activation binaries.


To address this gap, we propose \texttt{DCEmbed}, a scalable heuristic optimization method that accommodates neural surrogates in the objective as well as in inequality and equality constraints. 
We begin by expressing a trained $\ReLU$ network as a difference-of-convex (DC) functions \citep{awasthi_dc-programming_2024}.
At the core of \texttt{DCEmbed} is an exact, reduced-size formulation for optimizing over these convex components within the host model. 
We use this formulation within the penalty convex-concave procedure (CCP) \citep{lipp_variations_2016}, which solves a sequence of subproblems by retaining one convex component of each neural term and linearizing the other. 
Only the neural terms are approximated, with the residual model expressions and original binary decisions retained exactly. 
Off-the-shelf optimization solvers can therefore optimize the residual and convexified neural terms jointly, without the additional combinatorial search introduced by neural activation binaries.

\paragraph{Key Challenges.}
A central challenge is to turn the DC decomposition of a neural network into a representation that is both scalable and composable within a larger, structured optimization model. 
The formulation must recover the desired convex components exactly, keep the subproblems small as networks get larger or are embedded repeatedly, and reuse hidden variables when several predictions enter different parts of the model.
These requirements are complicated by the recursive dependence between the two convex components across layers in the DC decomposition.
The representation must also support neural objectives, inequalities, and equalities while preserving the residual expressions and original binary decisions. 
Additionally, CCP's conservative treatment of neural constraints can potentially exclude high quality feasible solutions and limit progress. 

\paragraph{Contributions.}
We make the following contributions.
\begin{itemize}
    \item \textbf{Exact neural formulations.}
    We show that one continuous auxiliary variable and two linear inequalities per hidden neuron are sufficient to represent the epigraphs of a trained $\ReLU$ network's DC components exactly, without activation binaries.
    Compared with a direct representation of both DC components, this halves the variable count and reduces the number of layerwise constraints required.
    The resulting formulation yields smaller CCP subproblems and supports the reuse of hidden variables and constraints.

    \item \textbf{Activation-binary-free optimization of embedded surrogates.}
    We combine this formulation with penalty CCP to optimize neural objectives, inequalities, and equalities jointly with the residual model and its original binary decisions.
    Penalty CCP enables exploration of points excluded by vanilla CCP's conservative approximation of neural constraints. 
    Because we add only continuous variables and linear inequalities, and no activation binaries, a convex residual model remains convex while a mixed-integer convex model remains mixed-integer convex.
    This allows existing mature optimization solvers to optimize the embedded surrogate and residual model together.

    \item \textbf{Superior computational scaling.}
    Across synthetic quadratic programs and two mixed-integer applications, \texttt{DCEmbed} finds strong feasible incumbents even when  exact mixed-integer formulations fail, and outperforms PGD on most tested continuous architectures. 
    On the largest resource allocation instance, it achieves over $4\times$ lower normalized primal integral than the best exact baseline. 
    In two-stage stochastic programming, it reaches the global surrogate optimum approximately $5\times$ faster than Gurobi ML at the largest scenario count, while exact mixed-integer formulations remain $4.81\%$ suboptimal at the time limit. 
    
\end{itemize}

\section{Related Work}
\label{sec:related_work}

\paragraph{Surrogate models in optimization.} 
Surrogate embeddings can vary in both the quantities learned and their role within the optimization model. 
Constraint learning augments an optimization model with feasibility conditions inferred from data \citep{fajemisin_optimization_2024, maragno_mixed-integer_2025}.
Other embeddings approximate physical relationships \citep{kody_modeling_2022}, or learn recourse functions and solution mappings of optimization problems \citep{dumouchelle_neur2sp_2022, chen_compact_2024}. 
Across these settings, the solution method depends on whether the learned terms enter the objective or constraints, and whether the host contains discrete decisions. 
Next, we discuss mixed-integer formulations, continuous optimization methods, and other alternatives for optimizing embedded models. 

\paragraph{Formulations for ReLU networks.}
The big-$M$ formulation is a standard exact encoding of a $\ReLU$ network using binary variables and linear constraints \citep{fischetti_deep_2018, grimstad_relu_2019}.
Using this encoding preserves the host objective and constraints but adds binary variables for the activations, turning continuous hosts into mixed-integer programs and increasing the number of discrete variables for 
mixed-integer hosts. 
Performance depends on the bounds on the preactivations, or inputs to each $\ReLU$, which determine the strength of the continuous relaxation. 
Bound tightening can strengthen these relaxations at the cost of additional preprocessing \citep{grimstad_relu_2019, sosnin_scaling_2024}, while stronger polyhedral formulations trade increased model size for tighter relaxations \citep{anderson_strong_2020, tsay_partition-based_2021}. 
Exact $\ReLU$ network embeddings are implemented in software packages such as OMLT \citep{ceccon_omlt_2022}, Gurobi Machine Learning \citep{gurobi_optimization_llc_gurobi_nodate}, and PySCIPOpt-ML \citep{turner_pyscipopt-ml_2024}. 
We instead optimize the embedded problem locally through a DC representation, preserving the host structure without introducing activation binaries or requiring additional computation for bound tightening. 

\paragraph{Continuous optimization over trained neural networks.}
For continuous hosts, projected gradient descent (PGD) \citep{beck_first-order_2017} differentiates through the fixed network and projects onto the residual constraints, with neural constraint violations handled through penalty terms. 
However, costly projections and poor numerical conditioning can limit the efficacy of PGD, and it does not directly accommodate discrete host decisions. 
\citet{awasthi_dc-programming_2024} construct a DC decomposition for $\ReLU$ networks and apply the difference-of-convex algorithm (DCA) to scalar neural objectives over convex domains, such as scalar function and adversarial margin minimization. 
In both applications, the network enters the host through a scalar objective optimized over a simple-to-project-onto convex input domain. 
\citet{liu_optimization_2025} propose a different DC formulation based on complementarity conditions for individual $\ReLU$s, whose violations are penalized before applying DCA. 
Here, we derive an exact, reduced-size epigraph formulation for the convex components in the network-level decomposition of \citet{awasthi_dc-programming_2024}.
A single set of continuous hidden variables and linear constraints serves to represent both convex components.
This shared formulation enables penalty CCP to optimize neural objectives, inequalities, and equalities jointly with the residual objective, constraints, and discrete decisions. 

\paragraph{Neural network verification and optimization.}
Neural network verifiers construct bounds on a network or computation graph over a bounded input region, refining the domain when the bounds are inconclusive. 
The same machinery can be used for continuous global optimization. 
In $\alpha,\beta$-CROWN, gradient-based search provides feasible incumbents, while a combination of certified bounds and branch-and-bound excludes input subdomains that cannot contain a better solution \citep{wang_beta-crown_2021, zhang_branch_2022, li_bridging_2026}.
In this case, both the host and neural terms enter the verification-based bounding framework. We instead retain the host objective and constraints explicitly within each CCP subproblem, allowing the underlying solver to handle their convex structure and any original discrete decisions directly.

\section{Problem Setting and Background}
We consider a \emph{host optimization model} with continuous decision variables $y\in \mathbb{R}^{n_y}$ and binary decision variables $\delta \in \{0,1\}^{n_\delta}$. The components to be replaced by neural surrogates are denoted $q_{\mathrm{obj}}$ in the objective, $q_{\mathrm{ineq},i}$ in
inequality constraint $i$, and $q_{\mathrm{eq},j}$ in equality
constraint $j$. For simplicity of notation, let $x$ denote the set of decision variables used as input to these functions. Thus, $x$ is a subvector of $(y,\delta)$.
Separating these components from the remaining model, we can write
\begin{equation}
\label{eq:host_problem}
\begin{aligned}
    \underset{y,\delta}{\operatorname{minimize}}\quad
        & \phi_0(y,\delta)+q_{\mathrm{obj}}(x)\\
    \text{s.t.}\quad
        & \phi_i(y,\delta)+q_{\mathrm{ineq},i}(x)\leq 0,
        && i=1,\ldots,m_{\mathrm{ineq}},\\
        & \psi_j(y,\delta)+q_{\mathrm{eq},j}(x)=0,
        && j=1,\ldots,m_{\mathrm{eq}},\\
        & y\in\mathcal{Y},\quad
          \delta\in\{0,1\}^{n_\delta},
\end{aligned}
\end{equation}
where $\mathcal{Y} \subseteq \mathbb{R}^{n_y}$ is a closed set. 
The scalar-valued $q$ terms denote possibly nonconvex, expensive to evaluate, or otherwise difficult components meant to be replaced by neural surrogates. The remaining expressions $\phi_0, \phi_i$, and $\psi_j$, together with the variables, constitute the \emph{residual model}. 


To approximate the $q$ terms in Problem~\ref{eq:host_problem}, we use a standard neural surrogate architecture~\citep{goodfellow_deep_2016} and take an affine combination of its outputs. In our notation, $f_{\theta} : \mathbb{R}^{n_0} \rightarrow \mathbb{R}^{p}$ is a network with fixed parameters $\theta$. 
For a network with $L$ $\ReLU$ layers, let $z^0(x) = x$ and define
\begin{equation}
\begin{aligned}
    a^\ell(x)
        &= W^\ell z^{\ell-1}(x) + b^\ell,
        && \ell=1,\ldots,L,\\
    z^\ell(x)
        &= \ReLU\left(a^\ell(x)\right),
        && \ell=1,\ldots,L,\\
    f_\theta(x)
        &= z^L(x),
\end{aligned}
\label{eq:relu_network}
\end{equation}
where $\operatorname{ReLU}(v)_k=\max\{v_k,0\}$ is applied componentwise, and $W^\ell\in\mathbb{R}^{n_\ell\times n_{\ell-1}}$ and $b^\ell\in\mathbb{R}^{n_\ell}$ for $\ell=1,\ldots,L$.
The approximation of each $q$ term in Problem~\ref{eq:host_problem} is specified by coefficient vectors $\nu \in \mathbb{R}^p$ and offsets $\beta \in \mathbb{R}$ (indexed by their role). In this way, the \emph{embedded problem} is
\begin{equation}
\label{eq:embedded_problem}
\begin{aligned}
    \underset{y,\delta}{\operatorname{minimize}}\quad
        & \phi_0(y,\delta)
          + \left(\nu_{\mathrm{obj}}^\top f_\theta(x)
          + \beta_{\mathrm{obj}}\right)\\
    \text{s.t.}\quad
        & \phi_i(y,\delta)
          + \left(\nu_{\mathrm{ineq},i}^\top f_\theta(x)
          + \beta_{\mathrm{ineq},i}\right)
          \leq 0,
        && i=1,\ldots,m_{\mathrm{ineq}},\\
        & \psi_j(y,\delta)
          + \left(\nu_{\mathrm{eq},j}^\top f_\theta(x)
          + \beta_{\mathrm{eq},j}\right)
          =0,
        && j=1,\ldots,m_{\mathrm{eq}},\\
        & y \in\mathcal{Y},\quad
          \delta\in\{0,1\}^{n_\delta}.
\end{aligned}
\end{equation}
For simplicity, Problem~\ref{eq:embedded_problem} shows a single network evaluation shared across the objective and constraints. 
If the objective or a constraint has no surrogate term, we set its corresponding coefficient vector $\nu$ and offset $\beta$ to be zero. Surrogate predictions can either enter directly in the constraints and objective as above, or an equality constraint can link a prediction to a host variable which can then appear elsewhere in the model. 
\subsection{Difference-of-Convex (DC) Representation}

A function $f$ is \emph{difference-of-convex (DC)} if it can be written as $f=g-h$, with $g$ and $h$ convex. 
The following result constructs a DC decomposition of each hidden activation in a ReLU network. To state the construction, define the positive and negative parts of a vector or matrix $M$ entrywise as $M^+=\max\{M,0\}$ and $M^-=\max\{-M,0\}$, so $M = M^+ - M^-$ and $|M| = M^+ + M^-$.

\begin{restatable}[\citealp{awasthi_dc-programming_2024}]{theorem}{dcdecomposition}
\label{thm:dc_decomposition}
For each hidden layer $\ell=1,\ldots,L$ of a $\ReLU$ network, there exist componentwise convex functions $G^\ell$ and $H^\ell$ such that $z^\ell(x)=G^\ell(x)-H^\ell(x)$.
One such decomposition is given by the following recursion. Suppressing the argument $x$ for simplicity, initialize $G^1=z^1, H^1=0$ for $\ell=1$, and for $\ell \geq 2$, set
\begin{equation}
\begin{aligned}
    &H^\ell=W^{\ell,-}G^{\ell-1} + W^{\ell,+}H^{\ell-1}, \quad
    &G^\ell = \max\left\{W^{\ell,+}G^{\ell-1} + W^{\ell,-}H^{\ell-1} + b^\ell, H^\ell\right\},
\end{aligned}
\label{eq:awasthi_relu_recurrence}
\end{equation}
where the maximum is taken componentwise.
\end{restatable}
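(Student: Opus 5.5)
We prove the statement by induction on the layer index $\ell$, establishing two claims at once: first, that every component of $G^\ell$ and $H^\ell$ is convex in $x$; second, that $z^\ell = G^\ell - H^\ell$ holds identically.

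\textbf{Base case} $\ell=1$. Here $z^1(x)=\ReLU(W^1x+b^1)$. Each component is the maximum of an affine function and $0$, so it is convex. With $G^1=z^1$ and $H^1=0$ (trivially convex), the identity $z^1=G^1-H^1$ holds by definition.

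\textbf{Inductive step, convexity.} Assume $G^{\ell-1}$ and $H^{\ell-1}$ are componentwise convex. The matrices $W^{\ell,+}$ and $W^{\ell,-}$ have nonnegative entries. A nonnegative combination of convex functions is convex. Hence each component of
\[
H^\ell = W^{\ell,-}G^{\ell-1}+W^{\ell,+}H^{\ell-1}
\quad\text{and}\quad
P^\ell := W^{\ell,+}G^{\ell-1}+W^{\ell,-}H^{\ell-1}+b^\ell
\]
is convex; adding the constant $b^\ell$ does not affect convexity. Then $G^\ell=\max\{P^\ell,H^\ell\}$ is a componentwise pointwise maximum of two convex functions, so it is convex.

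\textbf{Inductive step, identity.} The key algebraic observation is
\[
P^\ell - H^\ell = (W^{\ell,+}-W^{\ell,-})G^{\ell-1} - (W^{\ell,+}-W^{\ell,-})H^{\ell-1} + b^\ell .
\]
Using $W^{\ell,+}-W^{\ell,-}=W^\ell$ and the inductive hypothesis $G^{\ell-1}-H^{\ell-1}=z^{\ell-1}$, this simplifies to
\[
P^\ell - H^\ell = W^\ell z^{\ell-1}+b^\ell = a^\ell .
\]
Now apply the componentwise identity $\max\{u,v\}-v=\max\{u-v,0\}$ with $u=P^\ell$ and $v=H^\ell$:
\[
G^\ell-H^\ell=\max\{P^\ell,H^\ell\}-H^\ell=\max\{P^\ell-H^\ell,0\}=\ReLU(a^\ell)=z^\ell .
\]
This closes the induction.

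The only step needing care is the regrouping that shows $P^\ell-H^\ell=a^\ell$. It depends on the specific placement of the sign parts: $G^{\ell-1}$ is paired with $W^{\ell,+}$ in $P^\ell$ and with $W^{\ell,-}$ in $H^\ell$, and the pairing is reversed for $H^{\ell-1}$. That placement is what keeps every coefficient on the convex pieces nonnegative while the differences recombine to $W^\ell$. Every other step is routine.
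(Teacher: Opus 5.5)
Your proof is correct and takes the same approach as the paper: induction on $\ell$, splitting $W^\ell$ into sign parts so that $a^\ell = P^\ell - H^\ell$, then applying the identity $\max\{p,q\}-q=\max\{p-q,0\}$. You also verify explicitly that the new components are convex (as nonnegative combinations and a pointwise maximum of convex functions), a step the paper's proof leaves implicit.
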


We include a proof of this result in Appendix~\ref{appendix:dc_decomp_proofs} for completeness. 
This construction provides recursive expressions for the convex components within the DC decomposition, but does not give an explicit formulation of their epigraphs using continuous variables and linear constraints. 
As we will show in Section~\ref{section:method}, obtaining such a formulation will allow the components to be optimized jointly with the residual model while retaining its structure and avoiding neural activation binaries. 
\subsection{Convex--Concave Procedure (CCP)}
The convex-concave procedure (CCP) constructs convex upper approximations of DC functions by retaining one convex component and linearizing the other \citep{lipp_variations_2016}. 
For a scalar DC function $f=g-h$, we define the affine lower bound of $h$ at iterate $x^k$ as
\begin{equation}
    \widehat h^k(x) = h(x^k) + (d_h^k)^\top (x-x^k), \quad d_h^k \in \partial h(x^k),
\end{equation}
and define $\widehat g^k(x)$ analogously. 
Since $h$ and $g$ are convex, we have $\widehat h^k(x) \leq h(x)$ and $\widehat g^k(x) \leq g(x)$. Thus, 
\begin{equation}
    f(x) \leq g(x) - \widehat h^k(x), \quad  -f(x) \leq h(x) - \widehat g^k(x).
\end{equation}
The first bound provides an upper approximation when $f$ appears in the (minimization) objective, and a convex restriction when $f$ appears in an inequality constraint. 
Equalities are treated by applying the restriction in both directions. CCP solves the resulting subproblem and updates the affine approximations at the new iterate, repeating until a stopping criterion is met.
Noting that these restrictions can exclude points that satisfy the original constraints, penalty CCP relaxes the restrictions with nonnegative slack variables and penalizes their sum in the objective, allowing exploration of otherwise infeasible points in the restricted set. The next section will discuss how this procedure can be efficiently employed for Problem~\ref{eq:embedded_problem} under our exact formulation of the neural DC components.

\section{Method} \label{section:method}
In this section, we develop an exact formulation of the neural DC components that reduces the size of each CCP subproblem. 
Because these subproblems are solved repeatedly, reducing their size can significantly lower the overall computational cost of CCP, particularly for deeper networks and repeated surrogate embeddings. 
First, we show in Theorem~\ref{thm:shared_components} that both components can be expressed directly in terms of the network's hidden activations.
Using these expressions, we establish an exact epigraph formulation with one continuous auxiliary variable and two linear inequalities per hidden neuron in Theorem~\ref{thm:extended_representation}.
We use this representation within penalty CCP to optimize the neural objectives and constraints jointly with the residual model. 
\label{sec:method}
\begin{theorem}[Shared representation of DC components]
    \label{thm:shared_components}
    For each affine measurement in Problem~\ref{eq:embedded_problem}, let $f_r(x) = \nu_r^\top z^L(x) + \beta_r$, where $r$ denotes $\mathrm{obj}$, $(\mathrm{ineq},i)$, or  $(\mathrm{eq},j)$. We define:
    \begin{equation}
    \begin{aligned}
        \rho_r^L
          &= |\nu_r|, \\
        \rho_r^{\ell-1}
          &= |W^\ell|^\top \rho_r^\ell,
          && \ell = L, L-1, \ldots, 2, \\
        c_{r,\ell-1}
          &= \bigl(W^{\ell,-}\bigr)^\top \rho_r^\ell,
          && \ell = L, L-1, \ldots, 2.
    \end{aligned}
\label{eq:skip_coefficient_recurrence}
\end{equation}
Then, $f_r = G_r - H_r$ has the convex components
\begin{equation}
\label{eq:shared_components}
\begin{aligned}
    G_r(x)
        =\beta_r+(\nu_r^+)^\top z^L(x)
          +\sum_{\ell=1}^{L-1}c_{r,\ell}^\top z^\ell(x),\quad
    H_r(x)
        =(\nu_r^-)^\top z^L(x)
          +\sum_{\ell=1}^{L-1}c_{r,\ell}^\top z^\ell(x).
\end{aligned}
\end{equation}
\end{theorem}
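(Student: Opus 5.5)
The plan is to obtain $G_r$ and $H_r$ as nonnegative combinations of the convex components $G^L, H^L$ from Theorem~\ref{thm:dc_decomposition}, and then show that these combinations collapse to the closed forms in \eqref{eq:shared_components}. Since $\nu_r=\nu_r^+-\nu_r^-$ and $z^L=G^L-H^L$, we have
\begin{equation*}
f_r=\beta_r+(\nu_r^+)^\top G^L+(\nu_r^-)^\top H^L-\bigl[(\nu_r^-)^\top G^L+(\nu_r^+)^\top H^L\bigr].
\end{equation*}
I would therefore define $\widetilde G_r=\beta_r+(\nu_r^+)^\top G^L+(\nu_r^-)^\top H^L$ and $\widetilde H_r=(\nu_r^-)^\top G^L+(\nu_r^+)^\top H^L$. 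Both are convex, because each is a constant plus a nonnegative combination of componentwise convex functions. Their difference is $f_r$. It then remains only to prove $\widetilde G_r=G_r$ and $\widetilde H_r=H_r$.

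To do this, substitute $G^L=z^L+H^L$ to get
\begin{equation*}
\widetilde G_r=\beta_r+(\nu_r^+)^\top z^L+|\nu_r|^\top H^L,\qquad \widetilde H_r=(\nu_r^-)^\top z^L+|\nu_r|^\top H^L.
\end{equation*}
Both claims then reduce to the single identity
\begin{equation*}
(\rho_r^L)^\top H^L=\sum_{\ell=1}^{L-1}c_{r,\ell}^\top z^\ell .
\end{equation*}
To prove it, I would rewrite the recursion \eqref{eq:awasthi_relu_recurrence} for $H^\ell$ using $G^{\ell-1}=z^{\ell-1}+H^{\ell-1}$. This gives $H^\ell=W^{\ell,-}z^{\ell-1}+|W^\ell|H^{\ell-1}$ for $\ell\ge 2$. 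Taking the inner product with $\rho_r^\ell$ yields the one-step relation
\begin{equation*}
(\rho_r^\ell)^\top H^\ell=\bigl((W^{\ell,-})^\top\rho_r^\ell\bigr)^\top z^{\ell-1}+\bigl(|W^\ell|^\top\rho_r^\ell\bigr)^\top H^{\ell-1}=c_{r,\ell-1}^\top z^{\ell-1}+(\rho_r^{\ell-1})^\top H^{\ell-1},
\end{equation*}
where the last equality is exactly the definition \eqref{eq:skip_coefficient_recurrence}. Iterating this relation from $\ell=L$ down to $\ell=2$ telescopes the sum, and the residual term $(\rho_r^1)^\top H^1$ vanishes because $H^1=0$. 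This proves the identity. The case $L=1$ is immediate, since the sum is empty and $H^1=0$.

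The main point needing care is this backward telescoping: the coefficient recursion must be matched index-for-index with the forward recursion for $H^\ell$, including the boundary terms at $\ell=1$. Note also that the proof never uses the $\max$ in the recursion for $G^\ell$ beyond the identity $G^\ell-H^\ell=z^\ell$ and its convexity, both of which are supplied by Theorem~\ref{thm:dc_decomposition}.
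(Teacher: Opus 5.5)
Your proposal is correct and follows the paper's proof essentially step for step. Like the paper, you build $G_r$ and $H_r$ from $G^L,H^L$ using $\nu_r^\pm$, substitute $G^\ell=z^\ell+H^\ell$ to get $H^\ell=W^{\ell,-}z^{\ell-1}+|W^\ell|H^{\ell-1}$, and collapse $|\nu_r|^\top H^L$ into $\sum_{\ell=1}^{L-1}c_{r,\ell}^\top z^\ell$ using $H^1=0$. Your one-step relation $(\rho_r^\ell)^\top H^\ell=c_{r,\ell-1}^\top z^{\ell-1}+(\rho_r^{\ell-1})^\top H^{\ell-1}$ makes explicit the telescoping that the paper summarizes as ``expanding'' the recurrence. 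You also correctly note that convexity comes from the $G^L,H^L$ form, not from the final expression in $z^\ell$, a point the paper leaves implicit.
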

\begin{proof}
    Applying Theorem~\ref{thm:dc_decomposition} to the last hidden layer gives $z^L = G^L - H^L$. Writing $\nu_r=\nu_r^+-\nu_r^-$, we have the affine measurement representation $f_r=G_r-H_r$ with
    \begin{equation}
    \label{eq:measurement_dc}
    \begin{aligned}
        G_r
            =\beta_r+(\nu_r^+)^\top G^L+(\nu_r^-)^\top H^L, \quad
        H_r
            =(\nu_r^-)^\top G^L+(\nu_r^+)^\top H^L.
    \end{aligned}
    \end{equation}
    We define the parameterization $\eta^\ell=H^\ell$, and thus $G^\ell=z^\ell+\eta^\ell$.
    Substituting into Eq.~\ref{eq:awasthi_relu_recurrence} gives
    \begin{equation}
    \label{eq:eta_recurrence}
        \eta^1=0,
        \qquad
        \eta^\ell
        =W^{\ell,-}z^{\ell-1}+|W^\ell|\eta^{\ell-1},
        \quad \ell=2,\ldots,L.
\end{equation}
Substituting the parameterization into Eq.~\ref{eq:measurement_dc} gives
\begin{equation}
\begin{aligned}
G_r(x)=\beta_r+\left(\nu_r^+\right)^\top z^L(x)+ \left|\nu_r\right|^\top\eta^L(x), \quad
H_r(x)=\left(\nu_r^-\right)^\top z^L(x)+\left|\nu_r\right|^\top\eta^L(x).
\end{aligned}
\label{eq:scalar_components_with_eta}
\end{equation}
Expanding~(\ref{eq:eta_recurrence}) and combining with $\eta^1=0$ yields the equivalence $ \left|\nu_r\right|^\top\eta^L(x)
    =
    \sum_{\ell=1}^{L-1}
    c_{r,\ell}^\top z^\ell(x)$.
Finally, substituting this equivalence into Eq.~\ref{eq:scalar_components_with_eta} yields the desired representation. 
\end{proof}
The following theorem gives exact epigraph formulations for the components in Eq.~\ref{eq:shared_components}
by replacing the hidden activations with continuous auxiliary variables and imposing a set of linear inequalities.

\begin{restatable}[Extended Representation of DC Components]{theorem}{extendedrepresentation}
\label{thm:extended_representation}
For any input $x$, set $u^0=x$ and define the set
\begin{equation}
\mathcal{U}(x)
=
\left\{
u=(u^1,\ldots,u^L)
\;\middle|\;
\begin{aligned}
u^\ell &\ge W^\ell u^{\ell-1}+b^\ell,\\
u^\ell &\ge 0,
\end{aligned}
\quad \ell=1,\ldots,L
\right\},
\label{eq:ux}
\end{equation}
where all inequalities are componentwise. 
Let $\wbar{G}_r(u)$ and $\wbar{H}_r(u)$ be the affine expressions obtained from Eq.~\ref{eq:shared_components} by replacing each $z^\ell(x)$ with $u^\ell$.
Then, for every input $x$,
\begin{equation}
G_r(x)
=
\min_{u\in\mathcal{U}(x)}\wbar{G}_r(u),
\qquad
H_r(x)
=
\min_{u\in\mathcal{U}(x)}\wbar{H}_r(u).
\end{equation}
\end{restatable}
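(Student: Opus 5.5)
The plan is to show two inequalities: the true activations give a feasible point attaining the values, and every feasible point dominates the true activations componentwise.

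\textbf{Feasibility and upper bound.} Set $u^\ell=z^\ell(x)$ for all $\ell$. Since $z^\ell=\ReLU(W^\ell z^{\ell-1}+b^\ell)$, both defining inequalities of $\mathcal{U}(x)$ hold, so this $u$ lies in $\mathcal{U}(x)$. At this point $\wbar{G}_r(u)=G_r(x)$ and $\wbar{H}_r(u)=H_r(x)$ by Theorem~\ref{thm:shared_components}. Hence each minimum is at most the corresponding component.

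\textbf{Monotone domination.} Next I will show by induction on $\ell$ that every $u\in\mathcal{U}(x)$ satisfies $u^\ell\ge z^\ell(x)$ componentwise. The base case $u^0=x=z^0$ is trivial.

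The inductive step is where care is needed, because $W^\ell$ has mixed signs, so $u^{\ell-1}\ge z^{\ell-1}$ does not by itself give $W^\ell u^{\ell-1}\ge W^\ell z^{\ell-1}$. Naive domination can therefore fail, and I expect this to be the main obstacle. I would not try to prove $u^\ell\ge z^\ell$ directly. Instead, the aim is to show that the specific nonnegative combinations appearing in $\wbar{G}_r$ and $\wbar{H}_r$ are bounded below.

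\textbf{Lower bound via the backward recursion.} Write $e^\ell=u^\ell-z^\ell$, and use the convention $e^0=0$. I will prove by backward induction that
\[
(\nu_r^-)^\top u^L+\sum_{\ell<L}c_{r,\ell}^\top u^\ell\ \ge\ H_r(x),
\]
and similarly for $G$. The key inequality is the following. Take a weight vector $\rho^\ell\ge 0$ on layer $\ell$, with $\rho^\ell\ge$ the relevant coefficient. Combining $u^\ell\ge W^\ell u^{\ell-1}+b^\ell$ and $u^\ell\ge 0$ with $z^\ell=\max\{a^\ell,0\}$, one gets componentwise
\[
u^\ell_k-z^\ell_k\ \ge\ \min\{0,\,(W^\ell e^{\ell-1})_k\}\ \ge\ -(W^{\ell,+}(e^{\ell-1})^- + W^{\ell,-}(e^{\ell-1})^+)_k .
\]
The extra term $c_{r,\ell-1}^\top u^{\ell-1}$ is $(W^{\ell,-})^\top\rho^\ell$ applied to $u^{\ell-1}$, and it should compensate the negative contribution from $W^{\ell,-}$. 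Because $\rho^{\ell-1}=|W^\ell|^\top\rho^\ell$ accumulates these weights layer by layer, the deficit can be pushed back to layer $0$, where $e^0=0$.

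\textbf{Fallback.} If this bookkeeping breaks down, I will fall back on an LP duality argument. The problem $\min_{u\in\mathcal{U}(x)}\wbar{H}_r(u)$ is a linear program with nonnegative objective coefficients. I would exhibit dual multipliers: weight $\rho^\ell_k$ on the active constraint among $u^\ell_k\ge a^\ell_k$ and $u^\ell_k\ge 0$, choosing the one that is tight at $z$. I would then verify dual feasibility, using the definitions of $\rho_r$ and $c_{r,\ell}$ to balance the coefficients on each $u^{\ell-1}$. Once dual feasibility holds, weak duality certifies that $z$ is optimal.
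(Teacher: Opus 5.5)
Your main line is correct, but it is organized differently from the paper's proof. The paper never compares $u^\ell$ with $z^\ell$. For a feasible $u$, it builds lifted components $\wtilde{H}^\ell=\wtilde{\eta}^\ell$ and $\wtilde{G}^\ell=u^\ell+\wtilde{\eta}^\ell$, where $\wtilde{\eta}^\ell=W^{\ell,-}u^{\ell-1}+|W^\ell|\wtilde{\eta}^{\ell-1}$. It then proves by forward induction that $\wtilde{G}^\ell\ge G^\ell$ and $\wtilde{H}^\ell\ge H^\ell$ componentwise. This works because only the nonnegative matrices $W^{\ell,\pm}$ act on the dominated quantities, and the two constraints of $\mathcal{U}(x)$ give exactly $\wtilde{G}^\ell\ge\max\{\wtilde{P}^\ell,\wtilde{Q}^\ell\}$. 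The result then follows from $\wbar{G}_r(u)=\beta_r+(\nu_r^+)^\top\wtilde{G}^L+(\nu_r^-)^\top\wtilde{H}^L$, the identity of Theorem~\ref{thm:shared_components} with $u$ in place of $z$.

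In your error coordinates, the paper carries a forward cushion $\wtilde{\eta}^\ell-\eta^\ell\ge(e^\ell)^-$. Your argument is the transposed version of the same one-layer inequality: you fix the weights $\rho_r$ and push the deficit backward. To make your sketch rigorous, state the invariant explicitly. With $\gamma=\nu_r^-$ (or $\nu_r^+$), so that $0\le\gamma\le\rho_r^L$, show
\[
\gamma^\top e^L+\sum_{\ell=m}^{L-1}c_{r,\ell}^\top e^\ell\;\ge\;-(\rho_r^m)^\top (e^m)^-,\qquad m=L-1,\ldots,1 .
\]
\begin{itemize}
\item Each step substitutes $(e^m)^-\le W^{m,+}(e^{m-1})^-+W^{m,-}(e^{m-1})^+$, which follows from your displayed inequality.
\item The $(e^{m-1})^+$ terms are then absorbed by $c_{r,m-1}=(W^{m,-})^\top\rho_r^m$. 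For $m<L$ this is exact; at the top, $m=L$, it leaves the nonnegative remainder $(W^{L,-})^\top(\rho_r^L-\gamma)$.
\item The $(e^{m-1})^-$ coefficients total at most $\rho_r^{m-1}$.
\item The chain closes because $e^1\ge0$, since $u^0=x$ is fixed. This is the only part of your ``monotone domination'' claim that survives; as you note, the claim fails in general.
\end{itemize}

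What each route buys: the paper's version gives an $r$-independent, layerwise dominance statement, which directly justifies sharing one $u$ across all measurements and both components. Yours makes transparent why the skip coefficients $c_{r,\ell}$ appear: they are exactly what absorbs positive errors leaking through $W^{\ell,-}$.

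Your fallback, as literally stated, is not dual feasible. Stationarity in $u^L$ forces the layer-$L$ multipliers to sum to $\nu_r^-$ (resp.\ $\nu_r^+$), not $|\nu_r|$. For $\ell<L$, stationarity forces the multiplier mass $\omega^\ell=c_{r,\ell}+(W^{\ell+1})^\top\lambda^{\ell+1}$, where $\lambda^{\ell+1}$ is the part of $\omega^{\ell+1}$ placed on the preactivation constraints. This mass is generally not $(\rho_r^\ell)_k$. With these weights one checks $0\le\omega^\ell\le\rho_r^\ell$ by backward induction, and complementary slackness at $z$ then certifies optimality. Since your main argument already closes, this is only a side remark.
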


\begin{proof}[Proof Sketch]
We fix $x$ and take any $u \in \mathcal{U}(x)$. We define the common offsets
\begin{equation*}
    \begin{aligned}
        &\wtilde{\eta}^1 = 0, \quad
        &\wtilde{\eta}^\ell = W^{\ell,-}u^{\ell-1} + |W^\ell|\wtilde{\eta}^{\ell-1}, \quad \ell=2,\dots,L.
    \end{aligned}
\end{equation*}
and
\begin{equation*}
    \wtilde{G}^{\ell}(x)=u^{\ell} + \wtilde{\eta}^{\ell}(x), \quad 
    \wtilde{H}^{\ell}(x)= \wtilde{\eta}^{\ell}(x).
\end{equation*}
We can show by induction that $\wtilde{G}^{\ell}(x) \geq G^{\ell}(x)$ and $\wtilde{H}^{\ell}(x) \geq H^{\ell}(x)$, which combined with the affine measurement representation ultimately gives $\wbar{G}_r(u) \geq G_r(x)$ and $\wbar{H}_r(u) \geq H_r(x)$.
Conversely, we can show that there exists a feasible $u$ that attains this bound by considering the choice $ u^\ell = z^\ell(x) = \ReLU \left(W^\ell z^{\ell-1}(x) + b^\ell \right)$. See Appendix~\ref{app:representation_proof} for the full proof. 
\end{proof}

\paragraph{Sharing and formulation size.}
The set $\mathcal{U}(x)$ (\ref{eq:ux}) depends only on the trained network and its input, while the measurement coefficients enter through the expressions $G_r(u)$ and $H_r(u)$ in Eq.~\ref{eq:shared_components}.
The assignment $u^{\ell}=z^{\ell}(x)$ for $\ell = 1, \dots, L$ simultaneously minimizes both expressions for all $r$. Therefore, the epigraph constructions of these convex components can share $u$ and its layerwise constraints across the objective and constraints of each CCP subproblem.
Let $N_{\mathrm{hidden}} = \sum_{\ell=1}^{L}n_\ell$ denote the total number of hidden units in an $L$ layer $\ReLU$ network.
Our formulation uses $N_{\mathrm{hidden}}$ continuous hidden variables and $2N_{\mathrm{hidden}}$ layerwise inequalities, compared with $2N_{\mathrm{hidden}} -n_1$ variables and $3N_{\mathrm{hidden}} -n_1$ constraints for a direct representation.
This reduction can substantially improve computational performance as demonstrated in Section~\ref{sec:syn_qps}.

\subsection{Penalty CCP for the Embedded Problem}
\label{sec:ccp}
We let $x^k$ denote the input selected from $(y^k, \delta^k)$ at iteration $k$. 
For each affine measurement $r$, let $\widehat H_r^k$ and $\widehat G_r^k$ denote the affine lower bounds of $H_r$ and $G_r$, respectively, at $x^k$.
 Applying Theorem~\ref{thm:extended_representation} to the embedded problem in (\ref{eq:embedded_problem}), the CCP subproblem at iteration $k$ becomes
\begin{equation}
\label{eq:embedded_ccp_subproblem}
\begin{aligned}
    \underset{y,\delta,u,s,t}{\operatorname{minimize}}\quad
        & \phi_0(y,\delta)
          +\wbar{G}_{\mathrm{obj}}(u)
          -\widehat H_{\mathrm{obj}}^k(x)
        +\tau_k\left(\sum_i s_i+\sum_j t_j\right)\\
    \text{s.t.}\quad
        & \phi_i(y,\delta)
          +\wbar{G}_{\mathrm{ineq},i}(u)
          -\widehat H_{\mathrm{ineq},i}^k(x)\leq s_i,
        &&i=1,\ldots,m_{\mathrm{ineq}},\\
        & \psi_j(y,\delta)
          +\wbar{G}_{\mathrm{eq},j}(u)
          -\widehat H_{\mathrm{eq},j}^k(x)\leq t_j,
        &&j=1,\ldots,m_{\mathrm{eq}},\\
        & -\psi_j(y,\delta)
          +\wbar{H}_{\mathrm{eq},j}(u)
          -\widehat G_{\mathrm{eq},j}^k(x)\leq t_j,
        &&j=1,\ldots,m_{\mathrm{eq}},\\
        & u\in\mathcal{U}(x),\quad s\geq0,\quad t\geq0, \quad y\in\mathcal{Y},\quad
          \delta\in\{0,1\}^{n_\delta},
\end{aligned}
\end{equation}
where $\tau_k > 0$ is the slack penalty parameter. 
For constraints without a neural term, the corresponding slack is fixed to zero. 
Each equality constraint has been expanded into two restrictions sharing the same slack $t_j$. 
Theorem~\ref{thm:extended_representation} ensures that sharing $u$ across the objective and constraints introduces no additional approximation of the penalized CCP subproblem beyond the neural linearizations. 

Problem~\ref{eq:embedded_ccp_subproblem} retains the residual expressions and original binary variables, adding only continuous auxiliary variables and linear inequalities for the convexified neural terms.
As a result, the subproblems above are convex or mixed-integer convex when $\mathcal{Y}$ is convex, $\phi_0$ and $\phi_i$ are convex on $\mathcal{Y}\times [0,1]^{n_\delta}$, and each $\psi_j$ is affine, even when the components $q$ being approximated are nonconvex. 

\paragraph{Neural feasibility and stopping criteria.} 
Let $w=(y, \delta)$ denote the original host variables. We evaluate the embedded objective and violation as
\begin{equation}
\label{eq:objective_and_violation}
\begin{aligned}
    F(w)
        &=\phi_0(y,\delta)+f_{\mathrm{obj}}(x),\\
    V(w)
        &=\max\left\{
            0,\;
            \max_i[\phi_i(y,\delta)+f_{\mathrm{ineq},i}(x)]_+,\;
            \max_j|\psi_j(y,\delta)+f_{\mathrm{eq},j}(x)|
          \right\},
\end{aligned}
\end{equation}
where $[a]_+ = \max\{a,0\}$. A candidate point is considered feasible when we have $V(w) \leq \epsilon_{\mathrm{feas}}$, where $\epsilon_{\mathrm{feas}}$ is a desired neural feasibility tolerance.
We terminate the algorithm when two consecutive feasible candidates satisfy $    \left |F(w^{k+1}) - F(w^k) \right | \leq \epsilon_{\mathrm{obj}} \max\{1, \left |F(w^k)\right|\}$.

\paragraph{Penalty update.}
Let $\mu > 1$ denote the penalty scaling factor and $\tau_{\mathrm{max}}$ the maximum penalty. Then, we set $\tau_{k+1}=\min\{\mu\tau_k,\tau_{\max}\}$ when $V(w^{k+1})>\epsilon_{\mathrm{feas}}$ and keep $\tau_{k+1}=\tau_k$ otherwise.

\section{Experiments}
\label{sec:experiments}
To assess \texttt{DCEmbed}'s performance, we evaluate how quickly it finds high quality feasible solutions within a fixed-time budget.
A standard metric to capture the tradeoff between speed and solution quality is the \emph{primal integral} \citep{berthold_measuring_2013, gasse_machine_2022}.
We use a \emph{normalized primal integral} (NPI) that scales the objective gap by the improvement available from a common initialization, making scores comparable across instances with different objective scales and initial gaps:
\begin{equation}
\label{eq:npi}
    \operatorname{NPI}
    = \frac{100}{T}\int_0^T
      \frac{J_{\mathrm{best}}(t)-J_{\mathrm{ref}}}
           {J_0-J_{\mathrm{ref}}}\,dt,
\end{equation}
where $J_{\mathrm{best}}(t)$ is the best feasible objective found by time $t$, $J_0$ is the initial objective, and $J_{\mathrm{ref}}$ is the best known feasible objective for the instance. 
Lower values indicate stronger primal progress, and $100\%$ indicates no improvement over the initial incumbent. 

All experiments use neural surrogates trained in PyTorch \citep{paszke_pytorch_2019}, with all optimization models formulated in Gurobi \citep{gurobi_optimization_llc_gurobi_2026}.
All baseline methods that use Gurobi are configured to prioritize finding feasible incumbents rather than solving to optimality. 
For detailed descriptions of the baselines and experiment settings, see Appendices~\ref{app:baseline_methods} and \ref{app:exp_details}.

\subsection{Network Scaling in Synthetic Convex Programs}
\label{sec:syn_qps}

We first study how \texttt{DCEmbed} scales with the width and depth of the embedded neural surrogate using synthetic quadratic programs (QPs) with repeated neural inequality constraints. 
We randomly generate convex QPs, where the decision vector $x\in \mathbb{R}^{150}$ is partitioned into $30$ blocks $x^{(r)} \in \mathbb{R}^5$:
\begin{equation}
\label{eq:random_qp}
\begin{aligned}
    \underset{x\in[-1,1]}{\operatorname{minimize}}\quad
        &\frac12\|Gx\|_2^2+c^\top x\\
    \text{s.t.}\quad
        &Ex=0,\quad |Cx|\leq\mu\mathbf{1},\\
        &f_\theta(x^{(r)})\leq0,\quad r=1,\ldots,30.
\end{aligned}
\end{equation} 
We train networks to approximate a fixed nonconvex constraint function and vary the width $w \in \{8,16,32,64,128,256\}$ at depth $d=3$, and vary the depth $d \in \{1,\ldots,6\}$ at width $w=64$.
We compare \texttt{DCEmbed}, Gurobi ML, big-$M$ with optimization-based bound tightening (OBBT)~\citep{grimstad_relu_2019}, and projected gradient descent (PGD). 
We repeat both sweeps over three independent trials using separately sampled residual models and newly trained networks. 
Within each trial, the residual model is fixed across architectures, and all methods receive the same initialization for each architecture. 
\begin{figure}[t]
    \centering
    \includegraphics[width=\linewidth]{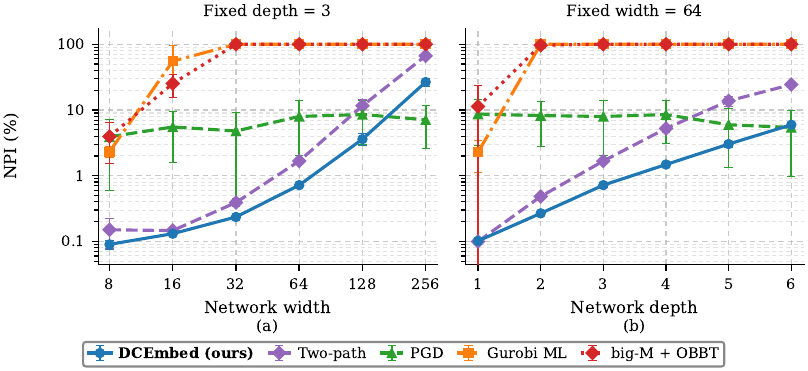}
    \caption{NPI (lower is better) on QPs as network (a) width and (b) depth increases. \texttt{DCEmbed} substantially outperforms the exact embeddings and improves on the two-path formulation.}
    \label{fig:random_qp_scaling}
\end{figure}
\paragraph{Results.}
Figure~\ref{fig:random_qp_scaling} shows that \texttt{DCEmbed} achieves substantially lower normalized primal integral than both Gurobi ML and big-$M$ with OBBT. 
Neither of these latter exact formulations improve upon the initial feasible incumbent for wider and deeper networks. 
At $(w,d)=(64,3)$, \texttt{DCEmbed} achieves a mean NPI of $0.72\%$ compared to $7.97\%$ for PGD and $100\%$ for both exact embeddings. 
\texttt{DCEmbed} achieves lower NPI than PGD across most architectures, although its advantage narrows as the neural network size grows, with PGD performing best at the largest width and slightly better at the largest depth. 
We also compare \texttt{DCEmbed}'s shared hidden representation with separate layerwise representations of the two convex components (\emph{two-path}).
The shared formulation reduces mean NPI by approximately $2.5$--$3.2\times$ at widths 128--256 and $3.5$--$4.5\times$ at depths 4--6. 

\subsection{Resource Allocation for Water Treatment}
We adapt the shared resource allocation benchmark from SurrogateLIB \citep{turner_pyscipopt-ml_2024} to evaluate repeated neural equalities within a mixed-integer residual model. 
The benchmark considers the allocation of shared resources to treat a set of water samples.
Each sample $i \in \mathcal{I}=\{1,\ldots,N\}$ has nine measured attributes. 
Let $b_i \in \mathbb{R}^9$ denote a sample's untreated feature vector, and let $r_i,a_i \in \mathbb{R}^9$ denote the amounts removed from and added to each attribute, producing the treated feature vector $x_i$.
The objective maximizes the number of samples predicted potable, subject to shared removal and addition budgets $R,A \in \mathbb{R}^9$.
The resulting embedded optimization problem is
\begin{equation}
\label{eq:water_problem}
\begin{aligned}
    \underset{x,r,a,s,y}{\operatorname{maximize}}\quad
        & \sum_{i\in\mathcal{I}} y_i\\
    \text{s.t.}\quad
        & x_i=b_i-r_i+a_i,\quad s_i=f_\theta(x_i),
        && i\in\mathcal{I},\\
        & y_i\in\{0,1\},\quad
          y_i=1\Longrightarrow s_i\geq\epsilon,
        && i\in\mathcal{I},\\
        & \sum_{i\in\mathcal{I}}r_i\leq R,\quad
          \sum_{i\in\mathcal{I}}a_i\leq A,\quad
          r,a\geq0.
\end{aligned}
\end{equation}
The binary variable $y_i$ allows a sample to be counted only when its classifier score $s_i$ meets the potability threshold $\epsilon$.
Each sample contributes one neural output equality and one original binary decision, with treatment decisions coupled through the shared budgets. 
The explicit score $s_i$ connects the surrogate to Gurobi's native indicator constraint, which remains part of the residual model. 

We use a fixed classifier trained on the Water Potability dataset \citep{kadiwal_water_2021} with four hidden $\ReLU$ layers of width 32 to predict sample potability.
We vary $N \in \{25,50,100,200,400,800\}$ over three candidate cohorts and compare \texttt{DCEmbed}, Gurobi ML, and big-$M$ with OBBT using the same network and initialization.
\begin{figure}[t]
    \centering
    \includegraphics[width=\linewidth]{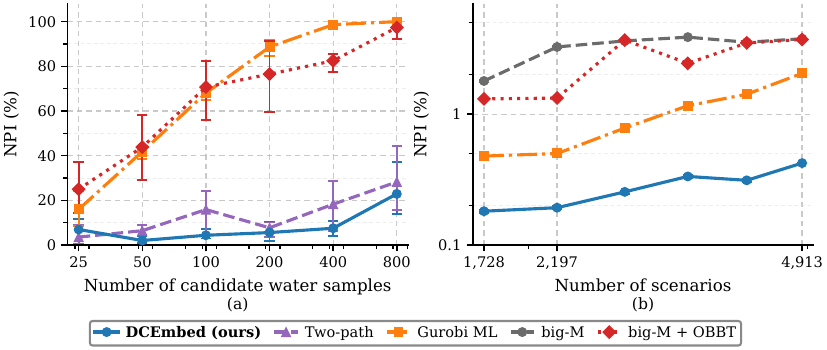}
    \caption{Scaling in (a) resource allocation for water treatment and (b) neural two-stage stochastic programming. \texttt{DCEmbed} achieves lower NPI than exact embeddings across all tested sizes.}
    \label{fig:neur2sp_water_scaling}
\end{figure}
\paragraph{Results.}
In Figure~\ref{fig:neur2sp_water_scaling}(a), we see \texttt{DCEmbed} achieves lower mean NPI against exact methods at every candidate count.
At $N=800$, it achieves an NPI of $22.6\%$ compared with $97.4\%$ for big-$M$ with OBBT and $100\%$ for Gurobi ML.
At $N=800$, the OBBT big-$M$ model retains roughly $89,160$ neural activation binaries in addition to the 800 original binary host variables. \texttt{DCEmbed} removes the neural activation binaries and solves a sequence of mixed-integer convex subproblems containing the original decisions. Compared to two-path, \texttt{DCEmbed} achieves lower mean NPI at all but the smallest size, with an average mean NPI improvement of $\sim2\times$ across all sizes. 
The smaller gains relative to the QP benchmark may reflect the additional difficulty of discrete subproblems. 

\subsection{Scenario Scaling in Two-Stage Stochastic Programming}
\label{sec:exp_neur2sp}

We revisit the pooling benchmark from Neur2SP \citep {dumouchelle_neur2sp_2022}, a two-stage stochastic program in which 16 binary first-stage decisions $x$ are chosen before uncertainty $\xi$ is realized. 
For each realization, a nonconvex second-stage problem determines the optimal recourse cost $Q(x,\xi)$. 
Neur2SP's per-scenario neural surrogate (\emph{NN-P}) learns $\Phi_\theta^P(x,\xi) \approx Q(x,\xi)$, replacing each second-stage optimization with a neural prediction. 
For $S$ scenarios with probabilities $p_s$, the embedded problem minimizes the first-stage cost plus the predicted expected recourse cost as
\begin{equation}
    \label{eq:neur2sp_nnp}
    \underset{x\in\mathcal{X}}{\operatorname{minimize}}
    \quad
    c^\top x+
    \sum_{s=1}^{S}p_s\Phi_\theta^{P}(x,\xi_s),
\end{equation}
where $\mathcal{X}$ contains the first-stage constraints and binary decisions. 
The objective embeds $S$ evaluations of the same trained neural network, all coupled through the first-stage decisions $x$.

We use the released \emph{NN-P} network with one hidden $\ReLU$ layer of width 64 and vary $S$ from $1,728$ to $4,913$ over 6 sizes. 
We compare \texttt{DCEmbed}, Gurobi ML, and Neur2SP's original and OBBT big-$M$ formulations on the same first-stage model, network, and scenarios across three trials.
\paragraph{Results.}
Figure~\ref{fig:neur2sp_water_scaling}(b) shows that \texttt{DCEmbed} achieves the lowest average NPI across all scenario counts under the $180s$ budget. At $S=4,913$, its NPI is $0.423\%$, approximately $4.8\times$ and $8.8\times$ lower than Gurobi ML and big-$M$ with OBBT, respectively. 
Both \texttt{DCEmbed} and Gurobi ML recover the global optimum (computed offline) of the neural surrogate problem, but \texttt{DCEmbed} reaches it approximately $5\times$ faster than Gurobi ML at $S=4,913$ scenarios, with median times of 18.82 and 95.54 seconds, respectively. 
At this size, both big-$M$ variants remain $4.81\%$ suboptimal at the time limit. 
Even after bound tightening, the OBBT formulation contains $170,714$ neural activation binaries. 
\texttt{DCEmbed} avoids these additional binaries, finding optimal designs earlier through the sequence of subproblems containing only the 16 original first-stage binaries. 
The \emph{NN-P} architecture is a single layer, and thus \texttt{DCEmbed}'s formulation is equivalent to two-path.

\section{Conclusion}
\label{sec:conclusion}
We presented \texttt{DCEmbed}, a scalable local heuristic for optimizing trained neural surrogates embedded in the objectives, inequalities, and equalities of optimization problems. 
The method combines an exact, reduced-size epigraph formulation of the DC components of the surrogate with penalty CCP, retaining the residual expressions and original discrete decisions without adding activation binaries. 
Across various benchmarks, \texttt{DCEmbed} 
shows stronger progress towards high-quality feasible solutions than exact mixed-integer embeddings.
It achieves over $4\times$ lower NPI in resource allocation and finds the surrogate optimum $\sim5\times$ faster than Gurobi ML in two-stage stochastic programming. 

Avoiding activation binaries shifts the computational bottleneck to the sequence of CCP subproblems, whose cost remains an important limitation at larger scales. 
While our formulation reduces their size for a fixed DC decomposition, a natural next step is to tailor the decomposition itself to the iterative procedure, balancing progress per iteration with subproblem cost to improve scalability and solution quality. 
Furthermore, \texttt{DCEmbed} is a local heuristic that provides no convergence or optimality guarantees, and thus establishing such guarantees remains a valuable theoretical direction. 

\section{AI use statement}
In this work, we used generative AI coding tools to assist with the implementation of our method and baselines. Other tasks requiring disclosure were either performed without generative AI tools or were not applicable to this work. Additionally, we used generative AI tools to help identify relevant literature and edit the research paper to improve readability. We have reviewed all AI-assisted work. LLM-generated code was verified and tested for correctness. We take responsibility for the final content of this work, including text, claims or artifacts produced with the aid of generative AI.

\bibliography{references}
\bibliographystyle{iclr2027_conference}

\appendix
\section{Proofs}

\subsection{Proof of Theorem~\ref{thm:dc_decomposition}}
\label{appendix:dc_decomp_proofs}

\dcdecomposition*

\begin{proof}
We proceed by induction on the layer index $\ell$. We show that each activation
$z^\ell$ can be written as
\begin{equation*}
    z^\ell(x)=G^\ell(x)-H^\ell(x),
\end{equation*}
where every coordinate of $G^\ell$ and $H^\ell$ is convex.

For the first layer, define
\begin{equation*}
    G^1(x)= \operatorname{ReLU}(W^1x+b^1),
    \qquad
    H^1(x)= 0.
\end{equation*}
We can clearly see that $z^1(x)$ admits a DC decomposition as
\begin{equation*}
    z^1(x)=\operatorname{ReLU}(W^1x+b^1)
          =G^1(x)-H^1(x).
\end{equation*}

We now assume that
\begin{equation*}
    z^{\ell-1}(x)
    =
    G^{\ell-1}(x)-H^{\ell-1}(x),
\end{equation*}
where $G^{\ell-1}$ and $H^{\ell-1}$ are componentwise convex. We decompose
$W^\ell$ entrywise as
\begin{equation*}
    W^\ell=W^\ell_+-W^\ell_-,
    \qquad
    W^\ell_+,W^\ell_-\geq 0,
\end{equation*}
and define
\begin{equation*}
\begin{aligned}
    P^\ell(x)
        &=
        W^\ell_+G^{\ell-1}(x)
        +W^\ell_-H^{\ell-1}(x)
        +b^\ell,\\
    Q^\ell(x)
        &=
        W^\ell_-G^{\ell-1}(x)
        +W^\ell_+H^{\ell-1}(x).
\end{aligned}
\end{equation*}

The preactivation $a^\ell$ can be written as

\begin{equation*}
\begin{aligned}
    a^\ell(x)
        &=W^\ell z^{\ell-1}(x)+b^\ell\\
        &=P^\ell(x)-Q^\ell(x).
\end{aligned}
\end{equation*}
We can use the identity
$\operatorname{ReLU}(p-q)=\max\{p,q\}-q$ to obtain
\begin{equation*}
\begin{aligned}
    z^\ell(x)
        &=\operatorname{ReLU}\!\left(P^\ell(x)-Q^\ell(x)\right)\\
        &=\max\!\left\{P^\ell(x),Q^\ell(x)\right\}-Q^\ell(x).
\end{aligned}
\end{equation*}
Therefore, setting
\begin{equation*}
    G^\ell(x)
        =
        \max\!\left\{P^\ell(x),Q^\ell(x)\right\},
    \qquad
    H^\ell(x)
        = Q^\ell(x),
\end{equation*}
yields the desired DC representation.
\end{proof}

\subsection{Proof of Theorem~\ref{thm:extended_representation}}
\label{app:representation_proof}
\extendedrepresentation*

\begin{proof}
We first prove that every feasible $u\in\mathcal{U}(x)$ satisfies
\begin{equation}
    \label{eq:appendix_proof_statement_dominance}
    \wbar{G}_r(u) \geq G_r(x), \quad \wbar{H}_r(u) \geq H_r(x)
\end{equation}

We define
\begin{equation*}
    \begin{aligned}
        &\wtilde{\eta}^1 = 0 \\
        &\wtilde{\eta}^\ell = W^{\ell,-}u^{\ell-1} + |W^\ell|\wtilde{\eta}^{\ell-1}, \quad \ell=2,\dots,L
    \end{aligned}
\end{equation*}

and 
\begin{equation*}
    \wtilde{G}^{\ell}(x)=u^{\ell} + \wtilde{\eta}^{\ell}(x), \quad 
    \wtilde{H}^{\ell}(x)= \wtilde{\eta}^{\ell}(x).
\end{equation*}

To begin, we will show by induction that 
\begin{equation}
    \label{eq:proof_induction_statement}
    \begin{aligned}
        & \wtilde{G}^{\ell}(x) \geq G^{\ell}(x) \\
        & \wtilde{H}^{\ell}(x) \geq H^{\ell}(x)
    \end{aligned}
\end{equation}
For the first layer, we take 

\begin{equation*}
    \begin{aligned}
        & \wtilde{G}^1(x) = u^1 \geq \ReLU (W^1x+b^1) = G^1(x) \\
        & \wtilde{H}^1(x) = 0 = H^1(x).
    \end{aligned}
\end{equation*}

Now, we assume that $\wtilde{G}^{\ell-1}(x) \geq G^{\ell-1}(x)$ and $\wtilde{H}^{\ell-1}(x) \geq H^{\ell-1}(x)$.

We can define
\begin{equation*}
    \begin{aligned}
        & \wtilde{P}^{\ell}(x) = W^{\ell,+}\wtilde{G}^{\ell-1}(x) + W^{\ell,-}\wtilde{H}^{\ell-1}(x) + b^\ell \\
        & \wtilde{Q}^{\ell}(x) = W^{\ell,-}\wtilde{G}^{\ell-1}(x) + W^{\ell,+}\wtilde{H}^{\ell-1}(x).
    \end{aligned}
\end{equation*}

Note that $\wtilde{P}^{\ell}(x) \geq P^\ell(x), \wtilde{Q}^{\ell}(x) \geq Q^\ell(x)$, and that $\wtilde{Q}^{\ell}(x) = \wtilde{\eta}^{\ell}(x) = \wtilde{H}^{\ell}(x)$. Thus, it follows that $\wtilde{H}^{\ell}(x) = \wtilde{Q}^{\ell}(x) \geq Q^{\ell}(x) = H^{\ell}(x)$.

Because $u^\ell \geq 0$, we have 
\begin{equation*}
    \wtilde{G}^{\ell}(x) = u^{\ell} + \wtilde{\eta}^{\ell}(x) \geq \wtilde{Q}^{\ell}(x).
\end{equation*}

Because $u^\ell \geq W^{\ell}u^{\ell-1} + b^\ell$, we have
\begin{equation*}
    \begin{aligned}
        \wtilde{G}^{\ell}(x) &\geq W^\ell u^{\ell-1} + \wtilde{\eta}^{\ell} + b^\ell \\
        &= \wtilde{P}^\ell (x)
    \end{aligned}
\end{equation*}

Thus, we have 
\begin{equation*}
    \begin{aligned}
        \wtilde{G}^{\ell}(x) &\geq \max\left(\wtilde{P}^\ell(x), \wtilde{Q}^\ell(x)\right) \\
    \implies  &\wtilde{G}^{\ell}(x) \geq G^\ell(x).
    \end{aligned}
\end{equation*}

This completes the induction, proving (\ref{eq:proof_induction_statement}). Now, recall that 
\begin{equation*}
    \wbar{G}_r(u) = 
\beta_r+(\nu_r^+)^\top u^L
+\sum_{\ell=1}^{L-1}c_{r,\ell}^\top u^\ell
\end{equation*}

We can rewrite $\wbar{G}_r(u)$ and $G_r(u)$ as 
\begin{equation*}
    \begin{aligned}
        & \wbar{G}_r(u) = \beta_r + (\nu_r^+)^\top \wtilde{G}^L + (\nu_r^-)^\top\wtilde{H}^L
    \end{aligned}
\end{equation*}
and 
\begin{equation*}
    \begin{aligned}
        & {G}_r(x) = \beta_r + (\nu_r^+)^\top {G}^L + (\nu_r^-)^\top{H}^L.
    \end{aligned}
\end{equation*}

Combined with the result of the induction, we can clearly see that $\wbar{G}_r(u) \geq G_r(x)$. A similar argument shows that $\wbar{H}_r(u) \geq H(x)$. 

The above argument has proven (\ref{eq:appendix_proof_statement_dominance}). We now show that there exists a feasible $u$ that attains this bound. Consider the choice
\begin{equation*}
    u^\ell = z^\ell(x) = \ReLU \left(W^\ell z^{\ell-1}(x) + b^\ell \right).
\end{equation*}
This is clearly feasible. Thus, with this assignment, we can see that $\wbar{G}_r(u)$ attains the same value as $G_r(x)$, and $\wbar{H}_r(u)$ attains the same value as $H_r(x)$.
\end{proof}

\section{Baseline Methods}
\label{app:baseline_methods}

\subsection{Big-$M$ Formulation}
\label{app:big_m}

For a $\ReLU$ neural network, consider the preactivations $a^\ell(x) \in \mathbb{R}^{n_\ell}$ of layer $\ell$. Let $a_j^{\ell}(x)$ denote the $j$-th component (neuron) of the preactivation $a^\ell(x)$, with activation $z_j^\ell(x) = \max\{a^\ell_j(x), 0\}$. 

Assuming we have valid bounds on the preactivations given by
\begin{equation*}
    L_j^\ell \leq a_j^\ell(x) \leq U_j^\ell,
\end{equation*}
we can represent the graph of the $\ReLU$ exactly by
\begin{equation*}
    \begin{aligned}
        & z_j^\ell(x) \geq a^\ell_j(x), \quad z_j^\ell(x) \leq a^\ell_j(x) -L^\ell_j\left(1- \delta_j^\ell \right) \\
        & z_j^\ell(x) \geq0, \quad z_j^\ell(x) \leq U_j^\ell \delta_j^\ell \\
        & \delta_j^\ell \in \{0,1\}.
    \end{aligned}
\end{equation*}

\subsection{Big-$M$ + Optimization Based Bound Tightening (OBBT)}
OBBT strengthens the formulation in Appendix~\ref{app:big_m} by tightening the preactivation bounds before optimization \citep{grimstad_relu_2019}. 
Starting from valid bounds, we process the hidden layers in order.
For layer $\ell$, let $\mathcal{F}_{\ell-1}$ denote the feasible set defined by the input constraints and the constraints representing earlier layers, using their current bounds. 
With $z^0=x$, we solve the following auxiliary problems:
\begin{equation}
\begin{aligned}
\widehat L_j^\ell
&\leq\min_{(x,z)\in\mathcal F_{\ell-1}}
\left(W_{j:}^\ell z^{\ell-1}+b_j^\ell\right),\\
\widehat U_j^\ell
&\geq\max_{(x,z)\in\mathcal F_{\ell-1}}
\left(W_{j:}^\ell z^{\ell-1}+b_j^\ell\right),
\end{aligned}
\end{equation}
and we update the preactivation bounds as:
\begin{equation}
L_j^\ell\leftarrow\max\{L_j^\ell,\widehat L_j^\ell\},
\qquad
U_j^\ell\leftarrow\min\{U_j^\ell,\widehat U_j^\ell\}.
\end{equation}
The resulting activation bounds are carried into the auxiliary problems for subsequent layers. 
A neuron requires no activation binary when $U_j^\ell \leq 0$ or $L_j^\ell \geq 0$, since its output is then fixed to zero or its preactivation, respectively. 
Our implementation uses a single complete forward pass, with a time-limited solve for each minimization and maximization problem. 
Even with a 0.1 second limit per solve, the preprocessing time for OBBT can be substantial. 
For the water treatment instance with $800$ samples, a complete forward pass requires $153,600$ auxiliary solves, taking up to $256$ minutes (more than $85\times$ the evaluation budget). 
\subsection{Projected Gradient Descent}
For the continuous QP benchmark, PGD projects onto the convex residual feasible set $\mathcal{X}$ and penalizes the neural inequalities as
\begin{equation}
\begin{aligned}
P_\rho(x)&=\frac12\|Gx\|_2^2+c^\top x
+\frac{\rho}{2}\sum_{r=1}^{30}[f_\theta(x^{(r)})]_+^2,\\
x^{k+1}&=\Pi_{\mathcal X}(x^k-\alpha\nabla P_\rho(x^k)).
\end{aligned}
\end{equation}
We fix $\alpha=0.1$ and $\rho=100$ across architectures and trials. 
We formulate and solve the projection step in Gurobi as a QP. Between consecutive neural feasible and neural infeasible iterates, we search along the line segment between them for a better feasible solution. 

\section{Experiment Details}
\label{app:exp_details}
Within each instance, all methods use the same fixed neural surrogate, residual model, and initial feasible incumbent. 
Solvers run single-threaded, and performance is evaluated over a $180s$ horizon. 
The final exact baseline solves use \texttt{MIPFocus=1} in Gurobi to prioritize finding feasible incumbents. 

We use the same OBBT protocol across all benchmarks, sizes, and trials. Bound-tightening of the big-$M$ formulation precedes the $180s$ optimization horizon, and its time cost is not counted towards the solution time.
This grants the tightened baseline additional preprocessing time followed by its full optimization budget. 

For \texttt{DCEmbed}, we use the penalty update and objective stopping rule in Section~\ref{sec:method} with $\mu=10$, $\tau_{\mathrm{max}}=10^6\tau_0$, and $\epsilon_{\mathrm{obj}}=10^{-6}$. 
The neural feasibility tolerance is $10^{-5}$ for the QP and water experiments. 
Neur2SP embeds only a neural objective. 
Initial penalties are $\tau_0=100$ for the QPs, and $\tau_0=10$ for the water experiments. 
These settings are fixed across problem sizes and trials within each benchmark. 

\subsection{Neural Surrogate Scaling in Random QPs}
\label{app:exp_details_surrogate_scaling}
The residual model has 150 variables $x \in [-1,1]^{150}$, 18 linear equalities, and 150 linear inequalities. 
We use $G\in\mathbb R^{132\times150}$, $E\in\mathbb R^{18\times150}$, and $C\in\mathbb R^{75\times150}$, sampling their entries and the linear objective coefficients independently as
\begin{equation}
G_{ij}\sim\mathcal N\!\left(0,\frac{1}{132}\right),
\qquad
E_{ij},C_{ij}\sim\mathcal N\!\left(0,\frac{1}{150}\right),
\qquad
c_i\sim\mathcal N\!\left(0,\frac{1}{150^2}\right).
\end{equation}
Each trial fixes the residual model across architectures, and all methods start from the feasible point $x^0=0$.

Each surrogate approximates the same nonconvex function on $[-1,1]^{5}$:
\begin{equation}
\begin{aligned}
q(u)={}&0.30u_1^2+0.20u_2^2+0.15u_3^2
+0.15u_4^2+0.10u_5^2\\
&+0.10u_1u_4+0.08u_2u_5
+0.08\bigl(1-\cos(\pi u_3)\bigr)-0.15.
\end{aligned}
\end{equation}
We train the neural surrogate with varying architectures of different depths and widths.
The surrogate is trained by minimizing mean-squared error using $20,000$ iterations of Adam and a minibatch size of $4,000$.
The training data is uniformly sampled from $[-1,1]^5$.
The learning rate decays from $2\times 10^{-3}$ to $10^{-5}$, and the checkpoint with the lowest validation error is retained. 
The three trials vary residual model and training seeds, and within each trial, architectures share the same training data.

\subsection{Scenario Scaling in Two-Stage Stochastic Programming}
\label{app:exp_details_neur2sp}
We use the pooling problem and released \emph{NN-P} architecture from \citet{dumouchelle_neur2sp_2022}, to which we refer the reader for the original problem and training details. 
No additional training is performed on the \emph{NN-P} model. 
We follow the benchmark's scenario discretization with $m\in\{12,\ldots,17\}$ points for each of its three uncertain quantities, giving $S=m^3$ scenarios. 
The original profit objective is negated to obtain the cost minimization convention used here. 

To provide the same initialization to every method, we construct a pool of eight feasible first-stage decisions. 
To construct the pool, we optimize the first-stage linear objective and a randomly sampled Gaussian linear objective over the original constraints. 
We evaluate the surrogate objective at these designs and retain the best five. 
\texttt{DCEmbed} uses them as sequential starts, while the exact formulations receive the same designs as MIP starts. 
Thus all methods receive identical candidate designs and screening information for initialization. 
We enumerate all the feasible first-stage decisions offline to establish the global optimum of the surrogate problem for evaluation. 

\subsection{Resource Allocation for Water Treatment}
\label{app:exp_details_resource_allocation}
The Water Potability dataset pairs nine water quality measurements (pH, hardness, dissolved solids, chloramines, sulfate, conductivity, organic carbon, trihalomethanes, and turbidity) with a binary potability label.
We train a classifier with four hidden $\ReLU$ layers of width 32 on $2,011$ training points. 
Each feature is normalized by subtracting its mean and dividing by its standard deviation. 
Training uses binary cross-entropy with logits and Adam for 80 epochs, with a batch size of 128, learning rate of $0.005$, and weight decay of $0.001$.
The network and normalization are then fixed for all experiments. 

The network outputs a logit, for which zero corresponds to a predicted potability probability of $0.5$.
We count a treated sample only when its score reaches the small positive margin $\epsilon=10^{-4}$.
Candidate water samples are records labeled non-potable that do not already satisfy this score requirement before treatment. 
For each of the sampling seeds, we shuffle the eligible records once and use the first $N$ as the candidate set. 
Addition and removal budgets follow the SurrogateLIB generator and remain fixed as $N$ increases.
All methods start from the untreated configuration with no samples counted as potable. 

\section{Additional Results}
\label{app:additional results}

\subsection{Solution Trajectories}
NPI combines solution quality and time into a single metric. To examine these effects separately, we show feasible incumbent trajectories in Figure~\ref{fig:appendix_trajectories} at the largest tested sizes for the experiments in Section~\ref{sec:experiments}.
\begin{figure}[t]
    \centering
    \includegraphics[width=\linewidth]{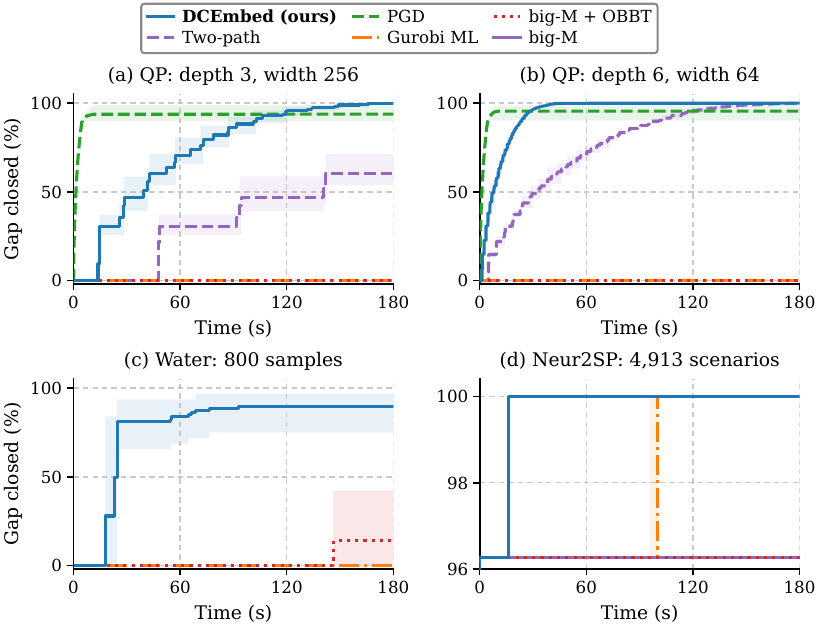}
    \caption{Primal solution trajectories showing gap closed over time (higher is better).}
    \label{fig:appendix_trajectories}
\end{figure}
For the QP experiment, PGD improves rapidly before stalling, while \texttt{DCEmbed} continues to improve and reaches better final objectives. 
\texttt{DCEmbed} also makes substantially faster progress than the two-path formulation, while both exact embeddings remain at the initial incumbent. 
In water treatment, \texttt{DCEmbed} finds strong incumbents early, whereas the exact embeddings make little or no progress. 
For Neur2SP, \texttt{DCEmbed} reaches the same optimum as Gurobi ML substantially earlier, while both big-$M$ variants stall at suboptimal designs. 

\subsection{Final Solution Quality}
Tables~\ref{tab:qp-final-objectives}--\ref{tab:neur2sp-final-objectives} report the best feasible objectives reached within the evaluation budget. 
The best objectives, including ties, are \textbf{bolded} in the tables below. 

\begin{table}[htbp]
\centering
\caption{Synthetic QP objectives (lower is better). Values are averaged over three trials, with zero indicating the initial incumbent objective.}
\label{tab:qp-final-objectives}
\vspace{4pt}
\small
\setlength{\tabcolsep}{3pt}
\renewcommand{\arraystretch}{1.35}
\begin{tabular*}{\linewidth}{@{\extracolsep{\fill}}rrccccc@{}}
\toprule[0.8pt]
\textbf{Depth} & \textbf{Width} & \textbf{DCEmbed (ours)} & \textbf{Two-path} & \textbf{PGD} & \textbf{Gurobi ML} & \textbf{big-M + OBBT} \\
\midrule[0.4pt]
1 & 64 & $\boldsymbol{-0.06527}$ & $\boldsymbol{-0.06527}$ & $-0.05990$ & $\boldsymbol{-0.06527}$ & $\boldsymbol{-0.06527}$ \\
2 & 64 & $\boldsymbol{-0.06522}$ & $\boldsymbol{-0.06522}$ & $-0.06014$ & $0.00000$ & $-0.00216$ \\
3 & 8 & $\boldsymbol{-0.06531}$ & $-0.06529$ & $-0.06315$ & $\boldsymbol{-0.06531}$ & $\boldsymbol{-0.06531}$ \\
3 & 16 & $\boldsymbol{-0.06532}$ & $\boldsymbol{-0.06532}$ & $-0.06203$ & $-0.03866$ & $\boldsymbol{-0.06532}$ \\
3 & 32 & $\boldsymbol{-0.06549}$ & $-0.06547$ & $-0.06267$ & $0.00000$ & $0.00000$ \\
3 & 64 & $-0.06526$ & $\boldsymbol{-0.06527}$ & $-0.06036$ & $0.00000$ & $0.00000$ \\
3 & 128 & $-0.06520$ & $\boldsymbol{-0.06522}$ & $-0.05996$ & $0.00000$ & $0.00000$ \\
3 & 256 & $\boldsymbol{-0.06487}$ & $-0.03864$ & $-0.06059$ & $0.00000$ & $0.00000$ \\
4 & 64 & $\boldsymbol{-0.06535}$ & $-0.06504$ & $-0.06007$ & $0.00000$ & $0.00000$ \\
5 & 64 & $\boldsymbol{-0.06522}$ & $-0.06455$ & $-0.06162$ & $0.00000$ & $0.00000$ \\
6 & 64 & $\boldsymbol{-0.06494}$ & $-0.06481$ & $-0.06174$ & $0.00000$ & $0.00000$ \\
\bottomrule[0.8pt]
\end{tabular*}
\end{table}

\begin{table}[htbp]
\centering
\caption{Number of potable water samples (higher is better), averaged over three trials.}
\label{tab:water-final-objectives}
\vspace{4pt}
\small
\setlength{\tabcolsep}{3pt}
\renewcommand{\arraystretch}{1.35}
\begin{tabular*}{\linewidth}{@{\extracolsep{\fill}}rccc@{}}
\toprule[0.8pt]
\textbf{Samples} & \textbf{DCEmbed (ours)} & \textbf{Gurobi ML} & \textbf{big-M + OBBT} \\
\midrule[0.4pt]
25 & $\boldsymbol{10.00}$ & $\boldsymbol{10.00}$ & $9.33$ \\
50 & $\boldsymbol{20.33}$ & $15.33$ & $15.00$ \\
100 & $\boldsymbol{26.00}$ & $17.67$ & $13.33$ \\
200 & $\boldsymbol{26.33}$ & $8.33$ & $12.67$ \\
400 & $\boldsymbol{30.33}$ & $1.33$ & $8.00$ \\
800 & $\boldsymbol{31.33}$ & $0.00$ & $4.33$ \\
\bottomrule[0.8pt]
\end{tabular*}
\end{table}

\begin{table}[htbp]
\centering
\caption{Neur2SP objective (higher is better), averaged over three trials.}
\label{tab:neur2sp-final-objectives}
\vspace{4pt}
\small
\setlength{\tabcolsep}{3pt}
\renewcommand{\arraystretch}{1.35}
\begin{tabular*}{\linewidth}{@{\extracolsep{\fill}}rcccc@{}}
\toprule[0.8pt]
\textbf{Scenarios} & \textbf{DCEmbed (ours)} & \textbf{Gurobi ML} & \textbf{big-M} & \textbf{big-M + OBBT} \\
\midrule[0.4pt]
1,728 & $\boldsymbol{171.5118}$ & $\boldsymbol{171.5118}$ & $\boldsymbol{171.5118}$ & $\boldsymbol{171.5118}$ \\
2,197 & $\boldsymbol{169.7280}$ & $\boldsymbol{169.7280}$ & $\boldsymbol{169.7280}$ & $\boldsymbol{169.7280}$ \\
2,744 & $\boldsymbol{169.9166}$ & $\boldsymbol{169.9166}$ & $\boldsymbol{169.9166}$ & $161.8368$ \\
3,375 & $\boldsymbol{169.9523}$ & $\boldsymbol{169.9523}$ & $161.5369$ & $\boldsymbol{169.9523}$ \\
4,096 & $\boldsymbol{169.1168}$ & $\boldsymbol{169.1168}$ & $161.4644$ & $161.4644$ \\
4,913 & $\boldsymbol{169.4088}$ & $\boldsymbol{169.4088}$ & $161.2606$ & $161.2606$ \\
\bottomrule[0.8pt]
\end{tabular*}
\end{table}

\end{document}

%% file: math_commands.tex
\usepackage{amsmath,amsfonts,bm}

\def\eqref#1{equation~\ref{#1}}

\def\1{\bm{1}}

\DeclareMathAlphabet{\mathsfit}{\encodingdefault}{\sfdefault}{m}{sl}
\SetMathAlphabet{\mathsfit}{bold}{\encodingdefault}{\sfdefault}{bx}{n}



%% file: references.bib
@book{goodfellow_deep_2016,
	title = {Deep {Learning}},
	publisher = {MIT Press},
	author = {Goodfellow, Ian and Bengio, Yoshua and Courville, Aaron},
	year = {2016},
}

@inproceedings{gasse_machine_2022,
	series = {Proceedings of {Machine} {Learning} {Research}},
	title = {The {Machine} {Learning} for {Combinatorial} {Optimization} {Competition} ({ML4CO}): {Results} and {Insights}},
	volume = {176},
	url = {https://proceedings.mlr.press/v176/gasse22a.html},
	booktitle = {Proceedings of the {NeurIPS} 2021 {Competitions} and {Demonstrations} {Track}},
	publisher = {PMLR},
	author = {Gasse, Maxime and Bowly, Simon and Cappart, Quentin and Charfreitag, Jonas and Charlin, Laurent and Chételat, Didier and Chmiela, Antonia and Dumouchelle, Justin and Gleixner, Ambros and Kazachkov, Aleksandr M. and Khalil, Elias and Lichocki, Pawel and Lodi, Andrea and Lubin, Miles and Maddison, Chris J. and Christopher, Morris and Papageorgiou, Dimitri J. and Parjadis, Augustin and Pokutta, Sebastian and Prouvost, Antoine and Scavuzzo, Lara and Zarpellon, Giulia and Yang, Linxin and Lai, Sha and Wang, Akang and Luo, Xiaodong and Zhou, Xiang and Huang, Haohan and Shao, Shengcheng and Zhu, Yuanming and Zhang, Dong and Quan, Tao and Cao, Zixuan and Xu, Yang and Huang, Zhewei and Zhou, Shuchang and Binbin, Chen and Minggui, He and Hao, Hao and Zhiyu, Zhang and Zhiwu, An and Kun, Mao},
	editor = {Kiela, Douwe and Ciccone, Marco and Caputo, Barbara},
	month = dec,
	year = {2022},
	pages = {220--231},
}

@article{berthold_measuring_2013,
	title = {Measuring the impact of primal heuristics},
	volume = {41},
	issn = {0167-6377},
	url = {https://www.sciencedirect.com/science/article/pii/S0167637713001181},
	doi = {https://doi.org/10.1016/j.orl.2013.08.007},
	number = {6},
	journal = {Operations Research Letters},
	author = {Berthold, Timo},
	year = {2013},
	pages = {611--614},
}

@misc{kadiwal_water_2021,
	title = {Water {Quality}: {Drinking} {Water} {Potability}},
	url = {https://www.kaggle.com/datasets/adityakadiwal/water-potability},
	publisher = {Kaggle},
	author = {Kadiwal, Aditya},
	year = {2021},
}

@incollection{paszke_pytorch_2019,
	address = {Red Hook, NY, USA},
	title = {{PyTorch}: an imperative style, high-performance deep learning library},
	booktitle = {Proceedings of the 33rd {International} {Conference} on {Neural} {Information} {Processing} {Systems}},
	publisher = {Curran Associates Inc.},
	author = {Paszke, Adam and Gross, Sam and Massa, Francisco and Lerer, Adam and Bradbury, James and Chanan, Gregory and Killeen, Trevor and Lin, Zeming and Gimelshein, Natalia and Antiga, Luca and Desmaison, Alban and Köpf, Andreas and Yang, Edward and DeVito, Zach and Raison, Martin and Tejani, Alykhan and Chilamkurthy, Sasank and Steiner, Benoit and Fang, Lu and Bai, Junjie and Chintala, Soumith},
	year = {2019},
}

@misc{gurobi_optimization_llc_gurobi_2026,
	title = {Gurobi {Optimizer} {Reference} {Manual}},
	url = {https://www.gurobi.com},
	author = {{Gurobi Optimization, LLC}},
	year = {2026},
}

@book{beck_first-order_2017,
	address = {Philadelphia, PA},
	title = {First-{Order} {Methods} in {Optimization}},
	isbn = {9781611974980 9781611974997},
	url = {http://epubs.siam.org/doi/book/10.1137/1.9781611974997},
	doi = {10.1137/1.9781611974997},
	language = {en},
	urldate = {2026-08-13},
	publisher = {Society for Industrial and Applied Mathematics},
	author = {Beck, Amir},
	month = oct,
	year = {2017},
}

@inproceedings{tsay_partition-based_2021,
	address = {Red Hook, NY, USA},
	series = {{NIPS} '21},
	title = {Partition-based formulations for mixed-integer optimization of trained {ReLU} neural networks},
	isbn = {9781713845393},
	booktitle = {Proceedings of the 35th {International} {Conference} on {Neural} {Information} {Processing} {Systems}},
	publisher = {Curran Associates Inc.},
	author = {Tsay, Calvin and Kronqvist, Jan and Thebelt, Alexander and Misener, Ruth},
	year = {2021},
}

@misc{gurobi_optimization_llc_gurobi_nodate,
	title = {Gurobi {Machine} {Learning}},
	url = {https://gurobi-machinelearning.readthedocs.io/},
	author = {{Gurobi Optimization, LLC}},
	note = {Published: Software documentation},
}

@misc{turner_pyscipopt-ml_2024,
	title = {{PySCIPOpt}-{ML}: {Embedding} {Trained} {Machine} {Learning} {Models} into {Mixed}-{Integer} {Programs}},
	shorttitle = {{PySCIPOpt}-{ML}},
	url = {http://arxiv.org/abs/2312.08074},
	doi = {10.48550/arXiv.2312.08074},
	urldate = {2026-08-07},
	publisher = {arXiv},
	author = {Turner, Mark and Chmiela, Antonia and Koch, Thorsten and Winkler, Michael},
	month = may,
	year = {2024},
	note = {arXiv:2312.08074 [math.OC]},
}

@article{ceccon_omlt_2022,
	title = {{OMLT}: {Optimization} \& {Machine} {Learning} {Toolkit}},
	volume = {23},
	number = {349},
	journal = {Journal of Machine Learning Research},
	author = {Ceccon, F. and Jalving, J. and Haddad, J. and Thebelt, A. and Tsay, C. and Laird, C. D and Misener, R.},
	year = {2022},
	pages = {1--8},
}

@misc{li_bridging_2026,
	title = {Bridging {Control} with {Neural} {Network} {Verifier} alpha-beta-{CROWN}: {A} {Tutorial}},
	shorttitle = {Bridging {Control} with {Neural} {Network} {Verifier} alpha-beta-{CROWN}},
	url = {http://arxiv.org/abs/2605.26577},
	doi = {10.48550/arXiv.2605.26577},
	urldate = {2026-08-06},
	publisher = {arXiv},
	author = {Li, Haoyu and Zhong, Xiangru and Cheng, Hao and Hu, Bin and Zhang, Huan},
	month = may,
	year = {2026},
	note = {arXiv:2605.26577 [eess.SY]},
}

@article{awasthi_dc-programming_2024,
	title = {{DC}-programming for neural network optimizations},
	volume = {95},
	issn = {0925-5001, 1573-2916},
	url = {https://link.springer.com/10.1007/s10898-023-01344-2},
	doi = {10.1007/s10898-023-01344-2},
	language = {en},
	number = {1},
	urldate = {2026-07-21},
	journal = {Journal of Global Optimization},
	author = {Awasthi, Pranjal and Mao, Anqi and Mohri, Mehryar and Zhong, Yutao},
	month = jan,
	year = {2024},
	pages = {5--21},
}

@inproceedings{zhang_branch_2022,
	series = {Proceedings of {Machine} {Learning} {Research}},
	title = {A {Branch} and {Bound} {Framework} for {Stronger} {Adversarial} {Attacks} of {ReLU} {Networks}},
	volume = {162},
	url = {https://proceedings.mlr.press/v162/zhang22ae.html},
	booktitle = {Proceedings of the 39th {International} {Conference} on {Machine} {Learning}},
	publisher = {PMLR},
	author = {Zhang, Huan and Wang, Shiqi and Xu, Kaidi and Wang, Yihan and Jana, Suman and Hsieh, Cho-Jui and Kolter, Zico},
	editor = {Chaudhuri, Kamalika and Jegelka, Stefanie and Song, Le and Szepesvari, Csaba and Niu, Gang and Sabato, Sivan},
	month = jul,
	year = {2022},
	pages = {26591--26604},
}

@inproceedings{wang_beta-crown_2021,
	title = {Beta-{CROWN}: {Efficient} {Bound} {Propagation} with {Per}-neuron {Split} {Constraints} for {Neural} {Network} {Robustness} {Verification}},
	volume = {34},
	url = {https://proceedings.neurips.cc/paper_files/paper/2021/file/fac7fead96dafceaf80c1daffeae82a4-Paper.pdf},
	booktitle = {Advances in {Neural} {Information} {Processing} {Systems}},
	publisher = {Curran Associates, Inc.},
	author = {Wang, Shiqi and Zhang, Huan and Xu, Kaidi and Lin, Xue and Jana, Suman and Hsieh, Cho-Jui and Kolter, J. Zico},
	editor = {Ranzato, M. and Beygelzimer, A. and Dauphin, Y. and Liang, P. S. and Vaughan, J. Wortman},
	year = {2021},
	pages = {29909--29921},
}

@article{maragno_mixed-integer_2025,
	title = {Mixed-{Integer} {Optimization} with {Constraint} {Learning}},
	volume = {73},
	issn = {0030-364X, 1526-5463},
	url = {https://pubsonline.informs.org/doi/10.1287/opre.2021.0707},
	doi = {10.1287/opre.2021.0707},
	language = {en},
	number = {2},
	urldate = {2026-07-22},
	journal = {Operations Research},
	author = {Maragno, Donato and Wiberg, Holly and Bertsimas, Dimitris and Birbil, Ş. İlker and Den Hertog, Dick and Fajemisin, Adejuyigbe O.},
	month = mar,
	year = {2025},
	pages = {1011--1028},
}

@article{fajemisin_optimization_2024,
	title = {Optimization with constraint learning: {A} framework and survey},
	volume = {314},
	issn = {03772217},
	shorttitle = {Optimization with constraint learning},
	url = {https://linkinghub.elsevier.com/retrieve/pii/S0377221723003405},
	doi = {10.1016/j.ejor.2023.04.041},
	language = {en},
	number = {1},
	urldate = {2026-07-22},
	journal = {European Journal of Operational Research},
	author = {Fajemisin, Adejuyigbe O. and Maragno, Donato and Den Hertog, Dick},
	month = apr,
	year = {2024},
	pages = {1--14},
}

@article{anderson_strong_2020,
	title = {Strong mixed-integer programming formulations for trained neural networks},
	volume = {183},
	issn = {1436-4646},
	url = {https://doi.org/10.1007/s10107-020-01474-5},
	doi = {10.1007/s10107-020-01474-5},
	language = {en},
	number = {1},
	urldate = {2026-07-22},
	journal = {Mathematical Programming},
	author = {Anderson, Ross and Huchette, Joey and Ma, Will and Tjandraatmadja, Christian and Vielma, Juan Pablo},
	month = sep,
	year = {2020},
	pages = {3--39},
}

@article{grimstad_relu_2019,
	title = {{ReLU} networks as surrogate models in mixed-integer linear programs},
	volume = {131},
	issn = {00981354},
	url = {https://linkinghub.elsevier.com/retrieve/pii/S0098135419307203},
	doi = {10.1016/j.compchemeng.2019.106580},
	language = {en},
	urldate = {2026-07-22},
	journal = {Computers \& Chemical Engineering},
	author = {Grimstad, Bjarne and Andersson, Henrik},
	month = dec,
	year = {2019},
	pages = {106580},
}

@article{fischetti_deep_2018,
	title = {Deep neural networks and mixed integer linear optimization},
	volume = {23},
	issn = {1383-7133, 1572-9354},
	url = {http://link.springer.com/10.1007/s10601-018-9285-6},
	doi = {10.1007/s10601-018-9285-6},
	language = {en},
	number = {3},
	urldate = {2026-07-22},
	journal = {Constraints},
	author = {Fischetti, Matteo and Jo, Jason},
	month = jul,
	year = {2018},
	pages = {296--309},
}

@article{liu_optimization_2025,
	title = {Optimization {Over} {Trained} {Neural} {Networks}: {Difference}-of-{Convex} {Algorithm} and {Application} to {Data} {Center} {Scheduling}},
	volume = {9},
	issn = {2475-1456},
	shorttitle = {Optimization {Over} {Trained} {Neural} {Networks}},
	url = {https://ieeexplore.ieee.org/document/11028923},
	doi = {10.1109/LCSYS.2025.3577571},
	urldate = {2026-07-22},
	journal = {IEEE Control Systems Letters},
	author = {Liu, Xinwei and Dvorkin, Vladimir},
	year = {2025},
	pages = {835--840},
}

@inproceedings{sosnin_scaling_2024,
	title = {Scaling {Mixed}-{Integer} {Programming} for {Certification} of {Neural} {Network} {Controllers} {Using} {Bounds} {Tightening}},
	url = {https://ieeexplore.ieee.org/document/10886078},
	doi = {10.1109/CDC56724.2024.10886078},
	urldate = {2026-07-21},
	booktitle = {2024 {IEEE} 63rd {Conference} on {Decision} and {Control} ({CDC})},
	author = {Sosnin, Philip and Tsay, Calvin},
	month = dec,
	year = {2024},
	note = {ISSN: 2576-2370},
	pages = {1645--1650},
}

@article{lipp_variations_2016,
	title = {Variations and extension of the convex–concave procedure},
	volume = {17},
	issn = {1389-4420, 1573-2924},
	url = {http://link.springer.com/10.1007/s11081-015-9294-x},
	doi = {10.1007/s11081-015-9294-x},
	language = {en},
	number = {2},
	urldate = {2026-07-21},
	journal = {Optimization and Engineering},
	author = {Lipp, Thomas and Boyd, Stephen},
	month = jun,
	year = {2016},
	pages = {263--287},
}

@inproceedings{dumouchelle_neur2sp_2022,
	address = {Red Hook, NY, USA},
	series = {{NIPS} '22},
	title = {{Neur2SP}: neural two-stage stochastic programming},
	isbn = {9781713871088},
	booktitle = {Proceedings of the 36th {International} {Conference} on {Neural} {Information} {Processing} {Systems}},
	publisher = {Curran Associates Inc.},
	author = {Dumouchelle, Justin and Patel, Rahul and Khalil, Elias B. and Bodur, Merve},
	year = {2022},
	note = {event-place: New Orleans, LA, USA},
}

@inproceedings{chen_compact_2024,
	series = {{ICML}'24},
	title = {Compact optimality verification for optimization proxies},
	booktitle = {Proceedings of the 41st {International} {Conference} on {Machine} {Learning}},
	publisher = {JMLR.org},
	author = {Chen, Wenbo and Zhao, Haoruo and Tanneau, Mathieu and Van Hentenryck, Pascal},
	year = {2024},
	note = {event-place: Vienna, Austria},
}

@article{kody_modeling_2022,
	title = {Modeling the {AC} power flow equations with optimally compact neural networks: {Application} to unit commitment},
	volume = {213},
	issn = {03787796},
	shorttitle = {Modeling the {AC} power flow equations with optimally compact neural networks},
	url = {https://linkinghub.elsevier.com/retrieve/pii/S0378779622004771},
	doi = {10.1016/j.epsr.2022.108282},
	language = {en},
	urldate = {2026-07-21},
	journal = {Electric Power Systems Research},
	author = {Kody, Alyssa and Chevalier, Samuel and Chatzivasileiadis, Spyros and Molzahn, Daniel},
	month = dec,
	year = {2022},
	pages = {108282},
}
